%
%
\documentclass[amscd,verbatim,12pt]{amsart}

\usepackage{tikz}
\usepackage{amssymb,amsmath,latexsym,graphics,youngtab,supertabular}
\usepackage{amsbsy}
\usepackage{amscd}
\usepackage{amsfonts}
\usepackage{amsthm}
\usepackage{epsfig}
\usepackage{times}
\usepackage{mathrsfs}
\usepackage{verbatim}
\usepackage{url}
\usepackage{graphicx}
\usepackage{txfonts}
\usepackage[pagebackref]{hyperref}

\usepackage[english]{babel}
\usepackage[T1]{fontenc}
\usepackage[utf8]{inputenc}

\thispagestyle{empty}
\usetikzlibrary{backgrounds,fit,decorations.pathreplacing}
\usetikzlibrary{arrows,shapes,positioning}
\usetikzlibrary{calc,through,backgrounds,chains}

\usetikzlibrary{arrows,shapes,snakes,automata,backgrounds,petri}

\usepackage[version=3]{mhchem}

\newtheorem{theorem}{Theorem}[section]
\newtheorem{lemma}[theorem]{Lemma}
\newtheorem{cor}[theorem]{Corollary}
\theoremstyle{definition}
\newtheorem{definition}[theorem]{Definition}
\newtheorem{proposition}[theorem]{Proposition}

\newcommand{\Z}{{\mathbb Z}}


\newcommand{\abs}[1]{\lvert#1\rvert}


\begin{document}

\title[Arithmetic properties for generalized cubic partitions]{Arithmetic properties for generalized cubic partitions and overpartitions modulo a prime}


\author[T. Amdeberhan]{Tewodros Amdeberhan}
\address{Department of Mathematics,
Tulane University, New Orleans, LA 70118, USA}
\email{tamdeber@tulane.edu}

\author[J. A. Sellers]{James A. Sellers}
\address{Department of Mathematics and Statistics, 
University of Minnesota Duluth, Duluth, MN 55812, USA}
\email{jsellers@d.umn.edu}

\author[A. Singh]{Ajit Singh}
\address{Department of Mathematics, University of Virginia, Charlottesville, VA 22904, USA}
\email{ajit18@iitg.ac.in}

\subjclass[2020]{05A17, 11F03, 11P83}

\date{\today}

\keywords{cubic partitions, Ramanujan congruences, modular forms}

\begin{abstract}
\noindent  
A cubic partition is an integer partition wherein the even parts can appear in two colors.  In this paper, we introduce the notion of generalized cubic partitions and prove a number
of new congruences akin to the classical Ramanujan-type. We emphasize two methods of proofs, one elementary (relying significantly on functional equations) and the other based on modular forms. We close by proving analogous results for generalized overcubic partitions.
\end{abstract}

\maketitle

\newcommand{\nn}{\nonumber}
\newcommand{\ba}{\begin{eqnarray}}
\newcommand{\ea}{\end{eqnarray}}
\newcommand{\realpart}{\mathop{\rm Re}\nolimits}
\newcommand{\imagpart}{\mathop{\rm Im}\nolimits}
\newcommand{\lcm}{\operatorname*{lcm}}
\newcommand{\vGam}{\varGamma}

\newtheorem{Definition}{\bf Definition}[section]
\newtheorem{Thm}[Definition]{\bf Theorem}
\newtheorem{Theorem}[Definition]{\bf Theorem}
\newtheorem{Example}[Definition]{\bf Example}
\newtheorem{Remark}[Definition]{\bf Remark}
\newtheorem{Lem}[Definition]{\bf Lemma}
\newtheorem{Note}[Definition]{\bf Note}
\newtheorem{Cor}[Definition]{\bf Corollary}
\newtheorem{Prop}[Definition]{\bf Proposition}
\newtheorem{Conj}[Definition]{\bf Conjecture}
\newtheorem{Problem}[Definition]{\bf Problem}
\numberwithin{equation}{section}

\section{Introduction} \label{sec-intro}
\setcounter{equation}{0}

\noindent

A partition $\lambda$ of a positive integer $n$ is a sequence of positive integers $\lambda_1 \geq \lambda_2 \geq \dots \geq \lambda_r$ such that $\sum_{i=1}^r \lambda_i = n$.  The values $\lambda_1,\lambda_2, \dots, \lambda_r$ are called the parts of $\lambda$.  We let $p(n)$ denote the number of partitions of $n$ for $n\geq 1$, and we define $p(0):=1.$  As an example, note that the partitions of $n=4$ are 
$$4, \ \ 3+1, \ \  2+2, \ \ 2+1+1, \ \ 1+1+1+1$$ 
and this means $p(4) = 5.$  

Throughout this work, we adopt the notations $(a;q)_{\infty}=\prod_{j\geq0}(1-aq^j)$, where $\abs{q}<1$, and $f_k:=(q^k;q^k)_\infty$.
As was proven by Euler, we know that 
\begin{align*} 
P(q):=\sum_{n\geq0}p(n)\,q^n&=\prod_{j\geq1}\frac1{1-q^j} = \frac{1}{f_1}.  
\end{align*}
Because the focus of this paper is on divisibility properties of certain partition functions, we remind the reader of the celebrated Ramanujan congruences for $p(n)$ \cite[p. 210, p. 230]{Rama}:  For all $n\geq 0,$
\begin{align} 
p(5n+4)&\equiv0\pmod 5,  \nonumber \\  
p(7n+5)& \equiv0\pmod 7, \textrm{\ \ and} \label{RamCong}\\
p(11n+6)& \equiv0\pmod{11}. \nonumber
\end{align}
Indeed, among Ramanujan’s discoveries, the equality
\begin{equation}
\label{RamBeautifulIdentity}
\sum_{n\geq0}p(5n+4)\,q^n=5\,\frac{(q^5;q^5)_{\infty}^5}{(q;q)_{\infty}^6}
\end{equation}
was regarded as his ``Most Beautiful Identity” by both Hardy and MacMahon (see \cite[p. xxxv]{Rama}).

Based on an identity on Ramanujan’s cubic continued fractions \cite{Andrews}, Chan \cite{Chan} introduced the notion of cubic partitions.  A \emph{cubic partition} of weight $n$ is a partition of $n$ wherein the even parts can appear in two colors.  We denote the number of such cubic partitions by $a_2(n)$ and define $a_2(0):=1$.  
For example, $a_2(3)=4$ since the list of cubic partitions of $n=3$ is: $3, \  {\color{red}2}+1,\  2+1, \ 1+1+1$. We also compute $a_2(4)=9$ because the cubic partitions of $4$ are 
$$
{\color{red}4},\ \ 4,\ \ 3+1, \ \ {\color{red}{2}} + {\color{red}{2}}, \ \ {\color{red}2} + 2,\ \ 2+2,\ \ {\color{red}2}+1+1,\ \ 2+1+1,\ \ 1+1+1+1.
$$
It is clear from the definition of cubic partitions that the generating function for $a_2(n)$ is given by 
\begin{align*} 
F_2(q):=\sum_{n\geq0}a_2(n)\,q^n&=\prod_{j\geq1}\frac1{(1-q^j)(1-q^{2j})} = \frac{1}{f_1f_2}.  
\end{align*}
Chan \cite{Chan} succeeded in proving the following elegant analogue of \eqref{RamBeautifulIdentity}:
$$\sum_{n\geq0}a_2(3n+2)\,q^n=3\,\frac{(q^3;q^3)_{\infty}^3(q^6;q^6)_{\infty}^3}{(q;q)_{\infty}^4(q^2;q^2)_{\infty}^4}.$$
It is immediate that, for all $n\geq 0,$ 
\begin{equation}
\label{ChanMod3}
a_2(3n+2)\equiv0\pmod3,
\end{equation}
 a result similar in nature to \eqref{RamCong}. Since then, many authors have studied similar congruences for $a_2(n)$ (see \cite{ChernDast}, \cite{ChuZhou} and references therein).  In particular, we highlight the following result of Chan and Toh \cite{ChanToh} which was proven using modular forms:  
\begin{theorem} 
\label{ChanTohMod5}
For all $n\geq 0,$ 
$$
a_2(5^jn+d_j) \equiv 0 \pmod{5^{\lfloor j/2\rfloor } }
$$
where $d_j$ is the inverse of 8 modulo $5^j$.  
\end{theorem}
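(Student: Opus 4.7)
My plan is to adapt Atkin's classical modular-forms approach to Ramanujan's congruences for $p(n)$ modulo $5^j$, now applied to the eta-quotient $F_2(q) = 1/(f_1 f_2)$. I would begin by recasting the generating function as a weakly holomorphic modular form: using $\eta(z) = q^{1/24}(q;q)_\infty$ with $q = e^{2\pi i z}$, one has $f_1 f_2 = q^{-1/8}\eta(z)\eta(2z)$, so $q^{1/8} F_2(q) = 1/(\eta(z)\eta(2z))$ lives in a space of weight-$(-1)$ weakly holomorphic forms on $\Gamma_0(2)$ with a specific multiplier system. The fractional exponent $1/8$ is precisely the source of the hypothesis $d_j = 8^{-1} \pmod{5^j}$: in order for the Atkin operator $U_5$, iterated $j$ times, to sieve out a single residue class from an $8$-th-integral $q$-expansion, the shift $d_j$ must satisfy $8 d_j \equiv 1 \pmod{5^j}$.

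Having set this up, I would introduce $U_5(\sum c_n q^n) := \sum c_{5n} q^n$ and let $L_j(q) := \sum_{n \geq 0} a_2(5^j n + d_j)\, q^n$. The strategy is to multiply $L_j(q)$ by an auxiliary eta-quotient $H(q)$ built out of $f_1, f_2, f_5, f_{10}$, chosen so that $H(q) L_j(q)$ becomes a holomorphic modular form of positive integer weight on the raised level $\Gamma_0(10)$ with appropriate Nebentypus. Inside the resulting finite-dimensional space $V \subset M_k(\Gamma_0(10), \chi)$, I would find a pair of eta-quotient generators $A$ and $B$ such that the composed operator $T : g \mapsto U_5(H g)/H'$, where $H'$ is chosen to restore the weight after $U_5$, stabilizes the $\Z$-algebra generated by $A$ and $B$ and acts on polynomial expressions $A^a B^b$ in a way that can be written down explicitly.

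The main obstacle, in my view, is the algebraic heart of the argument: verifying by direct computation that two iterations of $T$ introduce an extra factor of $5$ into every coefficient of the expansion. This is the exact analogue of the identity Atkin used in his proof of $p(5^j n + \delta_j) \equiv 0 \pmod{5^j}$, and its verification reduces to comparing finitely many $q$-expansions up to the Sturm bound on $\Gamma_0(10)$. Once this two-step $5$-gain is established, an induction on $j$, anchored by the base case $a_2(25n + 22) \equiv 0 \pmod 5$ (the first nontrivial instance, since $\lfloor 1/2 \rfloor = 0$), delivers the full divisibility statement $a_2(5^j n + d_j) \equiv 0 \pmod{5^{\lfloor j/2 \rfloor}}$. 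The slower $5$-per-$2$-steps growth here, versus Ramanujan's $5$-per-step for $p(n)$, is naturally explained by the presence of the extra factor $f_2$ in the denominator, which halves the effective ``weight drop'' available per application of $U_5$.
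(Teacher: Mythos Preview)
The paper does not give its own proof of this theorem at all: it is stated in the Introduction as a result of Chan and Toh \cite{ChanToh}, with the comment that it ``was proven using modular forms,'' and is used only as motivation for the closing remark about congruences modulo prime powers. There is therefore nothing in the paper to compare your attempt against.

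As for the proposal itself, what you have written is a plan rather than a proof, and it is a reasonable outline of the Atkin--Lehner style recursion that Chan and Toh in fact carry out. Your explanation of why the shift $d_j$ arises from the $q^{1/8}$ prefactor of $1/(\eta(z)\eta(2z))$ is correct, and your heuristic for why the $5$-adic growth is one power per \emph{two} applications of $U_5$ (hence the $\lfloor j/2\rfloor$) is on target. But the substantive content of such a proof lies precisely in the steps you label ``the main obstacle'': one must exhibit explicit generators for a $U_5$-stable $\Z$-module of eta-quotients on $\Gamma_0(10)$, write down the matrix of $U_5$ in that basis, and verify the $5$-divisibility pattern of its entries. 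None of this is done here; you have described the architecture but not laid any bricks. If you wish to turn this into an actual proof, you will need to produce those explicit identities (or cite them from Chan--Toh) and verify them, e.g.\ via a Sturm-bound computation.
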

%

By way of generalization, we define a \emph{generalized cubic partition} of weight $n$ to be a partition of $n\geq 1$ wherein each even part may appear in $c\geq 1$ different colors.  We denote the number of such generalized cubic partitions by $a_c(n)$ and define $a_c(0):=1$.  Notice that, for all $n\geq 0$, $a_1(n)=p(n)$ while $a_2(n)$ enumerates the cubic partitions of $n$ as described above.
It is clear that the generating function for $a_c(n)$ is given by 
\begin{align} \label{genfn_ac} 
F_c(q):=\sum_{n\geq0}a_c(n)\,q^n&=\prod_{j\geq1}\frac1{(1-q^j)(1-q^{2j})^{c-1}} = \frac{1}{f_1f_2^{c-1}}.
\end{align}

Our primary goal in this work is to prove several congruences modulo primes satisfied by $a_c(n)$ for an infinite family of values of $c$.  The proof techniques that we will utilize will include elementary approaches as well as modular forms.  In particular, we will prove the following infinite family of congruences modulo a prime $p\geq 3$ which are reminiscent of \eqref{ChanMod3}:  

\begin{theorem}
\label{a_p-1_mod_p}
Let $p$ be an odd prime.  Then, for all $n\geq 0$,   
$$a_{p-1}(pn+r) \equiv 0 \pmod{p}$$ 
where $r$ is an integer, $1\leq r\leq p-1$, such that $8r+1$ is a quadratic nonresidue modulo $p$.  
\end{theorem}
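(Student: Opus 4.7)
The plan is to reduce the generating function $F_{p-1}(q) = 1/(f_1 f_2^{p-2})$ modulo $p$ to a sum indexed by triangular numbers, and then observe that $pn+r$ is unreachable by such numbers when $8r+1$ is a quadratic nonresidue mod $p$.

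First, I would rewrite the generating function to isolate a power divisible by $p$:
\begin{equation*}
F_{p-1}(q) \;=\; \frac{1}{f_1 f_2^{p-2}} \;=\; \frac{f_2^2}{f_1 f_2^p}.
\end{equation*}
The Frobenius-type congruence $(1-q^{2j})^p \equiv 1 - q^{2pj} \pmod p$ implies $f_2^p \equiv f_{2p} \pmod p$, so
\begin{equation*}
F_{p-1}(q) \;\equiv\; \frac{1}{f_{2p}} \cdot \frac{f_2^2}{f_1} \pmod{p}.
\end{equation*}

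Next I would invoke Gauss's classical identity for the generating function of triangular numbers,
\begin{equation*}
\frac{f_2^2}{f_1} \;=\; \sum_{m \geq 0} q^{m(m+1)/2},
\end{equation*}
which gives
\begin{equation*}
F_{p-1}(q) \;\equiv\; \frac{1}{f_{2p}} \sum_{m \geq 0} q^{m(m+1)/2} \pmod{p}.
\end{equation*}
Since $1/f_{2p}$ is a power series in $q^{2p}$, the coefficient of $q^{pn+r}$ on the right-hand side is a $\Z$-linear combination of terms $[q^{pn+r-2pk}]\sum_{m}q^{m(m+1)/2}$ with $k \geq 0$; each such term is nonzero only if $pn+r-2pk = m(m+1)/2$ for some $m \geq 0$, which in particular forces $m(m+1)/2 \equiv r \pmod p$.

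The congruence $m(m+1)/2 \equiv r \pmod p$ is equivalent, after multiplying by $8$ and completing the square, to $(2m+1)^2 \equiv 8r+1 \pmod p$. Under the hypothesis that $8r+1$ is a quadratic nonresidue modulo $p$, this has no solution, so every contributing term in the coefficient of $q^{pn+r}$ vanishes. Hence $a_{p-1}(pn+r) \equiv 0 \pmod p$, as required. There is no serious obstacle here; the only thing to be careful about is the bookkeeping in step where one convolves $1/f_{2p}$ with the theta-like series, to confirm that the mod-$p$ residue of the exponent is preserved across the convolution (which it is, because $2p \equiv 0 \pmod p$).
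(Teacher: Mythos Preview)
Your proof is correct, and the underlying strategy matches the paper's: reduce $F_{p-1}(q)$ modulo $p$ to $\psi(q)$ times a power series in $q^p$, then observe that no triangular number is congruent to $r$ modulo $p$ when $8r+1$ is a nonresidue. The difference lies only in how that reduction is obtained. The paper first establishes an exact identity (its Lemma~2.1) by iterating the functional equation $F_{p-1}(q)=\psi(q)\psi(q^2)^{p-2}F_{p-1}(q^2)^2$ to write $F_{p-1}(q)=\psi(q)\prod_{i\ge 1}\psi(q^{2^i})^{p\cdot 2^{i-1}}$, and only then reduces modulo $p$. You instead go straight to the reduction via the one-line manipulation $1/(f_1 f_2^{p-2})=f_2^2/(f_1 f_2^p)\equiv \psi(q)/f_{2p}\pmod p$. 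Your route is shorter and bypasses the auxiliary lemma entirely; the paper's route has the minor advantage of producing an exact (not just mod-$p$) infinite-product expression for $F_{p-1}$, though that extra information is not used elsewhere.
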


We will then show that Theorem \ref{a_p-1_mod_p} can be generalized in the following natural way:  

\begin{cor} 
\label{a_kp-1_mod_p} 
Let $p\geq 3$ be prime and let $r$, $1\leq r\leq p-1$ such that, for all $n\geq 0,$ $a_{p-1}(pn+r) \equiv 0 \pmod{p}$.  Then, for any $k\geq 1$, and for all $n\geq 0,$ $a_{kp-1}(pn+r) \equiv 0 \pmod{p}$.  
\end{cor}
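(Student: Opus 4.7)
The plan is to deduce the corollary directly from Theorem \ref{a_p-1_mod_p} by manipulating the generating function \eqref{genfn_ac} modulo $p$. The starting point is the factorization
\begin{equation*}
F_{kp-1}(q) \;=\; \frac{1}{f_1 f_2^{kp-2}} \;=\; \frac{1}{f_1 f_2^{p-2}} \cdot \frac{1}{f_2^{(k-1)p}} \;=\; F_{p-1}(q) \cdot \frac{1}{(f_2^{p})^{k-1}}.
\end{equation*}
Since the Frobenius-type congruence $(1-q^{2j})^p \equiv 1-q^{2jp} \pmod{p}$ holds termwise for every $j\geq 1$, one obtains $f_2^p \equiv f_{2p} \pmod{p}$, and therefore
\begin{equation*}
F_{kp-1}(q) \;\equiv\; F_{p-1}(q) \cdot \frac{1}{f_{2p}^{k-1}} \pmod{p}.
\end{equation*}

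The key observation is that $1/f_{2p}^{k-1}$ is a power series in $q^{2p}$, hence a power series in $q^p$. Writing $1/f_{2p}^{k-1} = \sum_{j\geq 0} c_j q^{2pj}$ and $F_{p-1}(q) = \sum_{m\geq 0} a_{p-1}(m) q^m$, the coefficient of $q^{pn+r}$ on the right-hand side equals
\begin{equation*}
\sum_{\substack{m,\,j\geq 0 \\ m+2pj = pn+r}} a_{p-1}(m)\, c_j \pmod{p}.
\end{equation*}
In any such summand one has $m \equiv r \pmod{p}$, and because $1\leq r\leq p-1$ with $m\geq 0$, the index $m$ must take the form $m = pn'+r$ for some integer $n'\geq 0$ (explicitly $n' = n-2j$ whenever this is nonnegative). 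By the hypothesis inherited from Theorem \ref{a_p-1_mod_p}, every such $a_{p-1}(pn'+r)$ is divisible by $p$, so the entire sum vanishes modulo $p$. This yields $a_{kp-1}(pn+r) \equiv 0 \pmod{p}$ as claimed.

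I do not anticipate a serious obstacle here: the argument is essentially the standard ``multiply by a $p$-th power'' trick, and the only place to be attentive is the bookkeeping that forces $m \equiv r \pmod{p}$ in the convolution, which uses the hypothesis $1\leq r\leq p-1$ in an essential way. Everything else is the elementary identity $f_2^p \equiv f_{2p} \pmod{p}$ and a single Cauchy product.
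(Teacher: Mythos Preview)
Your proof is correct and follows essentially the same approach as the paper: factor $F_{kp-1}(q)=F_{p-1}(q)\cdot f_2^{-(k-1)p}$, replace $f_2^{(k-1)p}$ by $f_{2p}^{k-1}$ modulo $p$, and use that the latter is a series in $q^p$ so that the coefficient of $q^{pn+r}$ is governed by the hypothesis on $a_{p-1}$. The only difference is that you spell out the Cauchy product explicitly, whereas the paper simply remarks that $f_{2p}^{k-1}$ is a function of $q^p$; one minor framing point is that the corollary takes the congruence for $a_{p-1}$ as a \emph{hypothesis}, so you need not invoke Theorem~\ref{a_p-1_mod_p} itself.
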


We will also prove two ``isolated'' congruences, modulo the primes 7 and 11, respectively.  
\begin{theorem}
\label{isolated_congs} 
For all $n\geq 0,$ 
\begin{align*}
a_3(7n+4) &\equiv 0 \pmod{7} \textrm{\ \ and} \\
a_5(11n+10) &\equiv 0 \pmod{11}.
\end{align*}
\end{theorem}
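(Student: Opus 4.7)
Both congruences share a common opening reduction. Using Gauss's identity $\psi(q):=f_2^2/f_1=\sum_{n\ge 0}q^{n(n+1)/2}$, we can write $F_c(q)=\psi(q)/f_2^{c+1}$; the mod-$p$ Frobenius $f_2^p\equiv f_{2p}\pmod p$ then gives
\[
F_c(q)\;\equiv\;\frac{\psi(q)\,f_2^{\,p-c-1}}{f_{2p}}\pmod p.
\]
Since $1/f_{2p}$ contributes only at exponents divisible by $p$, the target residue class $pn+r$ is preserved, and it suffices to show in each case that $[q^{pn+r}]\bigl(\psi(q)\,f_2^{p-c-1}\bigr)\equiv 0\pmod p$.

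For the first congruence ($p=7$, $c=3$, $r=4$) the plan is to expand $\psi(q)f_2^3$ via Jacobi's identity $f_2^3=\sum_{m\ge 0}(-1)^m(2m+1)\,q^{m(m+1)}$:
\[
\psi(q)\,f_2^3 \;=\; \sum_{a,m\ge 0}(-1)^m(2m+1)\,q^{T_a+2T_m},\qquad T_k:=\tfrac{k(k+1)}{2}.
\]
Multiplying the constraint $T_a+2T_m\equiv 4\pmod 7$ by $8$ and using $8T_k=(2k+1)^2-1$ rewrites it as $(2a+1)^2+2(2m+1)^2\equiv 0\pmod 7$. Because the quadratic residues modulo $7$ are $\{1,2,4\}$ and $-2\equiv 5$ is a non-residue, this forces $2a+1\equiv 2m+1\equiv 0\pmod 7$, and the factor $(2m+1)$ then makes every contributing summand divisible by $7$. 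This is essentially the same QR/QNR trick that underlies Theorem \ref{a_p-1_mod_p}, now applied to the Jacobi-expanded series $\psi(q)f_2^3$ rather than to $\psi(q)$ alone.

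For the second congruence ($p=11$, $c=5$, $r=10$) the analogous quantity is $\psi(q)f_2^5$, but $f_2^5$ does not admit a one-variable Jacobi-style theta expansion. Our plan is to exploit the eta-quotient identity
\[
\psi(q)\,f_2^5 \;=\; f_1\,\phi(-q)\,\psi(q)^4
\]
(immediate from $\phi(-q)=f_1^2/f_2$ and $\psi(q)=f_2^2/f_1$), combined with Jacobi's formula $\psi(q)^4=\sum_{n\ge 0}\sigma(2n+1)q^n$, Euler's pentagonal expansion of $f_1$, and the theta series $\phi(-q)=\sum_{k\in\Z}(-1)^k q^{k^2}$. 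This converts the coefficient of $q^{11n+10}$ into a signed sum $\sum(-1)^{j+k}\sigma(2N+1)$ over triples $(j,k,N)$ satisfying $j(3j-1)/2+k^2+N=11n+10$, which we then attempt to kill modulo $11$ using the quadratic-form structure (noting, for instance, that $P_j := j(3j-1)/2$ mod $11$ avoids the residues $\{3,6,8,9,10\}$) together with the multiplicative properties of $\sigma$. Should the elementary route prove too delicate, we fall back on the modular-forms approach advertised in the abstract: recognize $F_5(q)$ as an eta quotient, multiply by a suitable auxiliary eta quotient to embed it in a space of integer-weight cusp forms, and invoke Sturm's bound to reduce the congruence to a finite coefficient verification. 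The main obstacle will be precisely this second case: because $(p-1)/2=5$ for $p=11$ no longer matches Jacobi's exponent $3$, the problem is genuinely multi-variate, and the real technical work lies either in finding the right cancellation mod $11$ or in setting up and bounding the correct space of modular forms.
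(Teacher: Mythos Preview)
Your treatment of the two congruences is asymmetric, and so is my assessment.

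\textbf{The mod $7$ case.} Your argument is complete and correct, and it is genuinely different from the paper's. The paper proves \emph{both} congruences uniformly by modular forms: for $p=7$ it takes $H(z)=\eta(z)^{76}/\eta(2z)^{2}\in M_{37}(\Gamma_0(8),\chi_1)$, observes that $H(z)|T_7\equiv\bigl(\sum a_3(7n+4)q^{n+1}\bigr)f_1^{11}\pmod 7$, and checks the first $37$ coefficients to invoke Sturm. Your route---reduce to $[q^{7n+4}]\psi(q)f_2^3$ and kill every term via the quadratic-form constraint $(2a+1)^2+2(2m+1)^2\equiv 0\pmod 7$ forcing $7\mid(2m+1)$---is elementary, self-contained, and avoids any machine check. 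It is a nicer proof than the paper's for this half of the theorem.

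\textbf{The mod $11$ case.} Here you have a gap. What you have written is a plan, not a proof: you rewrite $\psi(q)f_2^5$ as $f_1\,\phi(-q)\,\psi(q)^4$, expand into a triple sum involving $\sigma(2N+1)$, note which residues $P_j$ hits modulo $11$, and then say you will ``attempt to kill'' the sum using ``multiplicative properties of $\sigma$''. No such cancellation is exhibited, and there is no reason to expect one: unlike the mod $7$ case, the exponent constraint here does \emph{not} pin down a single variable to be $\equiv 0\pmod{11}$, so the Jacobi-style miracle does not recur, and $\sigma(2N+1)\pmod{11}$ has no usable pattern along a fixed residue class of $N$. Your fallback (``recognize $F_5$ as an eta quotient, multiply by an auxiliary, invoke Sturm'') is exactly what the paper does, but you give none of the data needed to make it a proof: which auxiliary form, which weight and level, which Sturm bound. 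The paper's choice is $G(z)=\eta(z)^{32}/\eta(2z)^{4}\in M_{14}(\Gamma_0(4),\chi_0)$, for which $G(z)|T_{11}\equiv\bigl(\sum a_5(11n+10)q^{n+1}\bigr)f_1^{3}\pmod{11}$ and the Sturm bound is only $7$. Until you either carry the elementary argument to an actual vanishing statement or specify and verify a concrete modular form, the second congruence remains unproved.
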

These two results will be proved via modular forms (using arguments that are very similar to one another).  
  
The remainder of this paper is organized as follows. 
In Section~\ref{elem_proofs} we present elementary proofs of Theorem~\ref{a_p-1_mod_p} and Corollary~\ref{a_kp-1_mod_p}. 
Section~\ref{modular_basics} contains the basic results from the theory of modular forms which we need to prove Theorem~\ref{isolated_congs}.  We close that section with the proof of Theorem~\ref{isolated_congs}.
Finally, in Section~\ref{summary}, we consider \emph{generalized cubic overpartitions} and prove an infinite family of congruences for the related partition functions via elementary methods.  
We conclude Section~\ref{summary} with some relevant remarks.

\section{Elementary Proofs of Theorem~\ref{a_p-1_mod_p} and Corollary~\ref{a_kp-1_mod_p}}
\label{elem_proofs}

Sellers~\cite[Theorem 2.1]{Sellers} developed, among several other results, a functional equation for $F_2(q)$. That result can be generalized to our family of generating functions.  Namely, thanks to \eqref{genfn_ac}, we can easily show that 
\begin{equation}
\label{functional_equation}
F_c(q) = \psi(q)\psi(q^2)^{c-1}F_c(q^2)^2
\end{equation}
where 
$$
\psi(q): = \frac{f_2^2}{f_1} = \sum_{k\geq 0}q^{k(k+1)/2}
$$ 
is one of Ramanujan's well--known theta functions \cite[p. 6]{Berndt2}.  We can then iterate (\ref{functional_equation}) to prove the following:
\begin{lemma}
\label{genfn1}
Let $p\geq 3$ be prime.  Then 
$$
F_{p-1}(q) = \psi(q)\prod_{i\geq 1}\psi(q^{2^i})^{p\cdot 2^{i-1}}.
$$  
\end{lemma}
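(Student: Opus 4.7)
The plan is to iterate the functional equation \eqref{functional_equation} with $c=p-1$, namely
$$F_{p-1}(q)=\psi(q)\,\psi(q^2)^{p-2}\,F_{p-1}(q^2)^2,$$
and absorb the residual $F_{p-1}(q^{2^k})^{2^k}$ into the infinite product in the $k\to\infty$ limit as formal power series.

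More precisely, I would prove by induction on $k\geq 1$ the intermediate formula
$$F_{p-1}(q)=\psi(q)\left(\prod_{i=1}^{k-1}\psi(q^{2^i})^{p\cdot 2^{i-1}}\right)\psi(q^{2^k})^{2^{k-1}(p-2)}\,F_{p-1}(q^{2^k})^{2^k},$$
with the base case $k=1$ being the functional equation itself (and with the empty product interpreted as $1$). For the inductive step, I would replace $q$ by $q^{2^k}$ in \eqref{functional_equation} to get $F_{p-1}(q^{2^k})=\psi(q^{2^k})\psi(q^{2^{k+1}})^{p-2}F_{p-1}(q^{2^{k+1}})^2$, raise to the $2^k$-th power, and substitute. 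The two powers of $\psi(q^{2^k})$ combine with total exponent
$$2^{k-1}(p-2)+2^k=2^{k-1}p,$$
which is precisely the $i=k$ factor of the target product, while the residual becomes $\psi(q^{2^{k+1}})^{2^{k}(p-2)}F_{p-1}(q^{2^{k+1}})^{2^{k+1}}$, of the correct shape for the next step.

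To conclude, I would pass to the limit $k\to\infty$ in the $q$-adic topology on $\mathbb{Z}[[q]]$. Since $\psi(q^{2^k})\equiv 1\pmod{q^{2^k}}$ and $F_{p-1}(q^{2^k})\equiv 1\pmod{q^{2^k}}$, the residual factor $\psi(q^{2^k})^{2^{k-1}(p-2)}F_{p-1}(q^{2^k})^{2^k}$ is congruent to $1$ modulo $q^{2^k}$, so every fixed coefficient of the right-hand side stabilizes and the infinite product converges to $F_{p-1}(q)$.

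There is no serious obstacle here: the only thing to watch is the exponent bookkeeping, and the clean cancellation $(p-2)+2=p$ in the inductive step makes the telescoping transparent (noting that the primality of $p$ is not used for this lemma, only for its subsequent application). Formal convergence is immediate from the $q^{2^k}$ vanishing of the residual factors.
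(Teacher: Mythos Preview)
Your proposal is correct and follows essentially the same approach as the paper: iterate the functional equation \eqref{functional_equation} and let the residual factor vanish in the limit. Your version is in fact more careful than the paper's, which simply writes out the first few iterations and then says ``the result follows by continuing to iterate indefinitely''; your explicit inductive formula and remark on $q$-adic convergence make precise what the paper leaves informal (and your observation that primality plays no role in this lemma is correct).
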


\begin{proof}  
We have
\begin{align*}
F_{p-1}(q) 
&= 
\frac{1}{f_1f_2^{p-2}} = \psi(q)\psi(q^2)^{p-2}F_{p-1}(q^2)^2 \\
&= 
\psi(q)\psi(q^2)^{p-2}\left( \psi(q^2)\psi(q^4)^{p-2}F_{p-1}(q^4)^2 \right)^2\\
&= 
\psi(q)\psi(q^2)^{p} \psi(q^4)^{2(p-2)}F_{p-1}(q^4)^4 \\
&= 
\psi(q)\psi(q^2)^{p} \psi(q^4)^{2(p-2)}\left(  \psi(q^4)\psi(q^8)^{p-2} F_{p-1}(q^8)^2 \right)^4 \\
&= 
\psi(q)\psi(q^2)^{p} \psi(q^4)^{2p} \psi(q^8)^{4(p-2)} F_{p-1}(q^8)^8 \\
& \qquad \qquad \qquad \qquad \qquad \qquad  \vdots
\end{align*}
The result follows by continuing to iterate (\ref{functional_equation}) indefinitely.  
\end{proof}

\noindent
Lemma~\ref{genfn1} then leads to a straightforward proof of Theorem~\ref{a_p-1_mod_p}.  

\begin{proof}[Proof of Theorem~\ref{a_p-1_mod_p}] Thanks to Lemma~\ref{genfn1}, we know 
$$F_{p-1}(q) = \psi(q)\prod_{i\geq 1}\psi(q^{2^i})^{p\cdot 2^{i-1}} \equiv \psi(q)\prod_{i\geq 1}\psi(q^{p\cdot 2^i})^{2^{i-1}} \pmod{p}.$$ 
Thus, if we wish to focus our attention on $a_{p-1}(pn+r)$ with the conditions as stated in the theorem, then we only need to consider $\psi(q)$ because 
$$
\prod_{i\geq 1}\psi(q^{p\cdot 2^i})^{2^{i-1}}
$$ 
is a function of $q^p.$  Thus, we simply need to confirm that we have no solutions to the equation 
$$
pn+r = \frac{k(k+1)}{2} 
$$ 
or 
$$
8(pn+r)+1 = 8\left( \frac{k(k+1)}{2} \right) +1  = (2k+1)^2.
$$  
If we did have a solution to the above, then 
$$
8r+1 \equiv (2k+1)^2 \pmod{p},
$$ 
but we have explicitly assumed that $r$ has been selected so that $8r+1$ is a quadratic nonresidue modulo $p$.  Therefore, there can be no such solutions, and this proves our result.  
\end{proof}

\begin{proof}[Proof of Corollary~\ref{a_kp-1_mod_p}]  The generating function for $a_{kp-1}(n)$ is 
\begin{align*}
\frac{1}{f_1f_2^{kp-2}} &= \frac{1}{f_1f_2^{p-2}f_2^{(k-1)p}} \equiv \frac{1}{f_1f_2^{p-2}f_{2p}^{k-1}}  \pmod{p}
\end{align*}
If we wish to focus our attention on $a_{kp-1}(pn+r)$ with the conditions as stated in the corollary, then we only need to look at $\frac{1}{f_1f_2^{p-2}}$  because 
$f_{2p}^{k-1}$ is a function of $q^{p}$.  
Notice that $\frac{1}{f_1f_2^{p-2}}$ is the generating function for $a_{p-1}(n).$  Given that a congruence mod $p$ is assumed to hold for $a_{p-1}(n)$, it must be the case that a similar congruence modulo $p$ will hold for $a_{kp-1}(n)$ on the same arithmetic progression.  
\end{proof}

\section{A Modular Forms Proof of Theorem~\ref{isolated_congs}} 
\label{modular_basics}

\noindent
We begin this section with some definitions and basic facts on modular forms that are instrumental in furnishing our proof of Theorem~\ref{isolated_congs}. For additional details, see for example \cite{koblitz1993, ono2004}. 
We first identify the matrix groups 
\begin{align*}
\text{SL}_2(\mathbb{Z}) & :=\left\{\begin{bmatrix}
a  &  b \\
c  &  d      
\end{bmatrix}: a, b, c, d \in \mathbb{Z}, ad-bc=1
\right\},\\
\Gamma_{0}(N) & :=\left\{
\begin{bmatrix}
a  &  b \\
c  &  d      
\end{bmatrix} \in \text{SL}_2(\mathbb{Z}) : c\equiv 0\pmod N \right\},
\end{align*}
\begin{align*}
\Gamma_{1}(N) & :=\left\{
\begin{bmatrix}
a  &  b \\
c  &  d      
\end{bmatrix} \in \Gamma_0(N) : a\equiv d\equiv 1\pmod N \right\},
\end{align*}
and 
\begin{align*}\Gamma(N) & :=\left\{
\begin{bmatrix}
a  &  b \\
c  &  d      
\end{bmatrix} \in \text{SL}_2(\mathbb{Z}) : a\equiv d\equiv 1\pmod N, ~\text{and}~ b\equiv c\equiv 0\pmod N\right\},
\end{align*}
where $N$ is a positive integer. A subgroup $\Gamma$ of the group $\text{SL}_2(\mathbb{Z})$ is called a \emph{congruence subgroup} if $\Gamma(N)\subseteq \Gamma$ for some $N$. The smallest $N$ such that $\Gamma(N)\subseteq \Gamma$
is called the \emph{level of $\Gamma$}. For example, $\Gamma_0(N)$ and $\Gamma_1(N)$
are congruence subgroups of level $N$. 

\smallskip
\noindent
Let $\mathbb{H}:=\{z\in \mathbb{C}: \text{Im}(z)>0\}$ be the upper half of the complex plane. Then, the following subgroup of the general linear group
$$\text{GL}_2^{+}(\mathbb{R})=\left\{\begin{bmatrix}
a  &  b \\
c  &  d      
\end{bmatrix}: a, b, c, d\in \mathbb{R}~\text{and}~ad-bc>0\right\}$$ acts on $\mathbb{H}$ by $\begin{bmatrix}
a  &  b \\
c  &  d      
\end{bmatrix} z=\displaystyle \frac{az+b}{cz+d}$.  
We identify $\infty$ with $\displaystyle\frac{1}{0}$ and define 
$$\begin{bmatrix}
a  &  b \\
c  &  d      
\end{bmatrix} \displaystyle\frac{r}{s}=\displaystyle \frac{ar+bs}{cr+ds},$$ 
where $\displaystyle\frac{r}{s}\in \mathbb{Q}\cup\{\infty\}$.
This gives an action of $\text{GL}_2^{+}(\mathbb{R})$ on the extended upper half-plane $\mathbb{H}^{\ast}=\mathbb{H}\cup\mathbb{Q}\cup\{\infty\}$. 
Suppose that $\Gamma$ is a congruence subgroup of $\text{SL}_2(\mathbb{Z})$. A \emph{cusp} of $\Gamma$ is an equivalence class in $\mathbb{P}^1=\mathbb{Q}\cup\{\infty\}$ under the action of $\Gamma$.
The group $\text{GL}_2^{+}(\mathbb{R})$ also acts on functions $f: \mathbb{H}\rightarrow \mathbb{C}$. In particular, suppose that $\gamma=\begin{bmatrix}
a  &  b \\
c  &  d      
\end{bmatrix}\in \text{GL}_2^{+}(\mathbb{R})$. If $f(z)$ is a meromorphic function on $\mathbb{H}$ and $\ell$ is an integer, then define the slash operator $|_{\ell}$ by 
$$(f|_{\ell}\gamma)(z):=(\text{det}~{\gamma})^{\ell/2}(cz+d)^{-\ell}f(\gamma z).$$
\begin{definition}

\noindent
Let $\Gamma$ be a congruence subgroup of level $N$. A holomorphic function $f: \mathbb{H}\rightarrow \mathbb{C}$ is called a 
\emph{modular form} with integer weight $\ell$ on $\Gamma$ if the following hold:
	\begin{enumerate}
		\item We have $f\left(\displaystyle \gamma z\right)=(cz+d)^{\ell}f(z)$ for all $z\in \mathbb{H}$ and all $\gamma=\begin{bmatrix}
		a  &  b \\
		c  &  d      
		\end{bmatrix} \in \Gamma$.
		\item If $\gamma\in \text{SL}_2(\mathbb{Z})$, then $(f|_{\ell}\gamma)(z)$ has a Fourier expansion of the form $$(f|_{\ell}\gamma)(z)=\displaystyle\sum_{n\geq 0}a_{\gamma}(n)q_N^n,$$
		where $q_N:=e^{2\pi iz/N}$.
	\end{enumerate}
\end{definition}

\noindent
For a positive integer $\ell$, the complex vector space of modular forms of weight $\ell$ with respect to a congruence subgroup $\Gamma$ is denoted by $M_{\ell}(\Gamma)$.
\begin{definition}\cite[Definition 1.15]{ono2004}
	If $\chi$ is a Dirichlet character modulo $N$, then we say that a modular form $f\in M_{\ell}(\Gamma_1(N))$ has 
\emph{Nebentypus character} $\chi$ if
	$$f\left( \frac{az+b}{cz+d}\right)=\chi(d)(cz+d)^{\ell}f(z)$$ for all $z\in \mathbb{H}$ and all $\begin{bmatrix}
	a  &  b \\
	c  &  d      
	\end{bmatrix} \in \Gamma_0(N)$. The space of such modular forms is denoted by $M_{\ell}(\Gamma_0(N), \chi)$. 
\end{definition}

\noindent
In this paper, the relevant modular forms are those that arise from eta-quotients. The \emph{Dedekind eta-function} $\eta(z)$ is defined by
\begin{align*}
\eta(z):=q^{1/24}(q;q)_{\infty}=q^{1/24}\prod_{n=1}^{\infty}(1-q^n),
\end{align*}
where $q:=e^{2\pi iz}$ and $z\in \mathbb{H}$, the upper half-plane. A function $f(z)$ is called an \emph{eta-quotient} if it is of the form
\begin{align*}
f(z)=\prod_{\delta\mid N}\eta(\delta z)^{r_\delta},
\end{align*}
where $N$ is a positive integer and $r_{\delta}$ is an integer. We now recall two valuable theorems from \cite[p. 18]{ono2004} which will be used to prove our results.
\begin{theorem}\cite[Theorem 1.64]{ono2004}\label{thm_ono1} If $f(z)=\prod_{\delta\mid N}\eta(\delta z)^{r_\delta}$ 
	is an eta-quotient such that $\ell=\frac{1}{2}\sum_{\delta\mid N}r_{\delta}\in \mathbb{Z}$, 
	$$\sum_{\delta\mid N} \delta r_{\delta}\equiv 0 \pmod{24}\ \ \ \ \ \ \text{and}\ \ \ \ \ \
	\sum_{\delta\mid N} \frac{N}{\delta}r_{\delta}\equiv 0 \pmod{24},$$
	then $f(z)$ satisfies $f\left( \gamma z\right)=\chi(d)(cz+d)^{\ell}f(z)$
	for every  $\gamma=\begin{bmatrix}
	a  &  b \\
	c  &  d      
	\end{bmatrix} \in \Gamma_0(N)$. 
Here the character $\chi$ is defined by $\chi(\bullet):=\left(\frac{(-1)^{\ell} s}{\bullet}\right)$, where $s:= \prod_{\delta\mid N}\delta^{r_{\delta}}$. 
\end{theorem}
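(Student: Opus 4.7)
The plan is to reduce the transformation behaviour of each factor $\eta(\delta z)^{r_\delta}$ to the classical modular transformation law of the Dedekind $\eta$-function, and then to show that the resulting combined multiplier collapses precisely to the Kronecker character $\chi(d)=\left(\tfrac{(-1)^{\ell}s}{d}\right)$ on $\Gamma_0(N)$, exactly when the two stated congruence conditions hold.

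First I would recall the classical transformation formula: for every $\gamma=\begin{pmatrix}a&b\\ c&d\end{pmatrix}\in\mathrm{SL}_2(\mathbb{Z})$ with $c>0$,
$$\eta(\gamma z)=\nu_\eta(\gamma)\,(cz+d)^{1/2}\,\eta(z),$$
where $\nu_\eta(\gamma)$ is an explicit $24$th root of unity whose closed form (due to Rademacher/Knopp) can be written as a Jacobi symbol in $c,d$ times an exponential whose phase is linear in $a,b,c,d$ (through a Dedekind sum), the precise shape depending on the parity of~$c$.

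Next, for $\gamma\in\Gamma_0(N)$ and each $\delta\mid N$, the divisibility $N\mid c$ forces $\delta\mid c$, so the matrix
$$\gamma_\delta:=\begin{pmatrix}a & \delta b\\ c/\delta & d\end{pmatrix}$$
lies in $\mathrm{SL}_2(\mathbb{Z})$ (its determinant is $ad-bc=1$), and a direct computation shows $\delta\gamma z=\gamma_\delta(\delta z)$. Applying the classical formula to $\gamma_\delta$, raising to the $r_\delta$-th power, and multiplying over $\delta\mid N$ yields
$$f(\gamma z)=\Bigl(\prod_{\delta\mid N}\nu_\eta(\gamma_\delta)^{r_\delta}\Bigr)(cz+d)^{\ell}f(z),$$
so the theorem reduces to identifying the bracketed multiplier with $\chi(d)$.

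The main step, and the main obstacle, is this last identification. I would substitute the closed form of $\nu_\eta(\gamma_\delta)$ into the product and separate it into a Jacobi-symbol piece and a $24$th-root-of-unity piece. Quadratic reciprocity, applied factor by factor between $d$ and $c/\delta$, reassembles the Jacobi pieces into $\left(\tfrac{s}{d}\right)$ (with $s=\prod_{\delta\mid N}\delta^{r_\delta}$), up to an extra sign that is absorbed into $(-1)^{\ell}$. On the phase side, when the $r_\delta$-th powers of the exponentials are combined, the total phase naturally organises into three linear combinations of the $r_\delta$: one weighted by $1$ (responsible for the sign $(-1)^{\ell}$), one weighted by $\delta$, and one weighted by $N/\delta$. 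The two hypotheses $\sum\delta r_\delta\equiv 0$ and $\sum(N/\delta)r_\delta\equiv 0$ modulo $24$ are designed precisely to kill the latter two contributions, leaving exactly $\chi(d)$. The finicky part is the case analysis on the parities of $c$, $d$, and each $c/\delta$, together with the use of $ad-bc=1$ and $N\mid c$ to rearrange the Dedekind-sum phases into those three clean linear combinations; once that bookkeeping is in place, the congruence conditions finish the job.
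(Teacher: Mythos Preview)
The paper does not prove this statement at all: it is quoted verbatim as \cite[Theorem 1.64]{ono2004} and used as a black box in Section~\ref{modular_basics}. So there is no ``paper's own proof'' against which to compare your attempt.

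That said, your sketch is the standard route to this result (as found, for instance, in the references behind Ono's statement). The reduction $\delta\gamma z=\gamma_\delta(\delta z)$ with $\gamma_\delta=\begin{pmatrix}a & \delta b\\ c/\delta & d\end{pmatrix}\in\mathrm{SL}_2(\mathbb{Z})$ is exactly right, and the identification of $\prod_{\delta\mid N}\nu_\eta(\gamma_\delta)^{r_\delta}$ with $\chi(d)$ via the explicit Rademacher formula for $\nu_\eta$ is the heart of the matter. Your description of how the two congruence hypotheses kill the stray $24$th roots of unity is accurate at the level of a plan. The only caution is that the ``finicky part'' you acknowledge---the parity case analysis and the manipulation of Dedekind sums or their Jacobi-symbol avatars---is where all the actual work lies, and a full proof must carry it out rather than gesture at it; but as an outline of strategy, this is correct and orthodox.
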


\noindent
Suppose that $f$ is an eta-quotient satisfying the conditions of Theorem \ref{thm_ono1} and that the associated weight $\ell$ is a positive integer. If the function $f(z)$ is holomorphic at all of the cusps of $\Gamma_0(N)$, then $f(z)\in M_{\ell}(\Gamma_0(N), \chi)$. The next theorem gives a necessary condition for determining orders of an eta-quotient at cusps.
\begin{theorem}\cite[Theorem 1.65]{ono2004}\label{thm_ono2}
	Let $c, d$ and $N$ be positive integers with $d\mid N$ and $\gcd(c, d)=1$. If $f$ is an eta-quotient satisfying the conditions of Theorem \ref{thm_ono1} for $N$, then the 
	order of vanishing of $f(z)$ at the cusp $\frac{c}{d}$ 
	is $$\frac{N}{24}\sum_{\delta\mid N}\frac{\gcd(d,\delta)^2r_{\delta}}{\gcd(d,\frac{N}{d})d\delta}.$$
\end{theorem}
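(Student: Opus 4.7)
The plan is to compute the leading term of the $q$-expansion of $(f|_\ell \gamma)(z)$ where $\gamma\in\mathrm{SL}_2(\mathbb{Z})$ sends $\infty$ to $c/d$, and then convert to the local uniformizer at the cusp via the cusp width. Choose integers $a,b$ with $cb-ad=1$ (possible since $\gcd(c,d)=1$) and set $\gamma = \begin{bmatrix} c & a \\ d & b\end{bmatrix}$ so that $\gamma(\infty)=c/d$. Theorem~\ref{thm_ono1} guarantees $f\in M_\ell(\Gamma_0(N),\chi)$, so $(f|_\ell\gamma)(z)$ is periodic of period equal to the width $h$ of the cusp $c/d$ in $\Gamma_0(N)$. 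It therefore suffices to find the leading exponent of $q=e^{2\pi i z}$ in $(f|_\ell\gamma)(z)$ as $z\to i\infty$ and multiply by $h$.

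The heart of the argument is a factor-by-factor analysis. For each $\delta\mid N$ put $g_\delta=\gcd(d,\delta)$; since $\gcd(c,d)=1$ one has $\gcd(\delta c,d)=g_\delta$, which produces the Hermite decomposition
$$\begin{bmatrix} \delta c & \delta a \\ d & b\end{bmatrix} \;=\; \gamma_\delta\,\begin{bmatrix} g_\delta & y_\delta \\ 0 & \delta/g_\delta\end{bmatrix},$$
with $\gamma_\delta\in\mathrm{SL}_2(\mathbb{Z})$ having first column $(\delta c/g_\delta,\,d/g_\delta)^T$, and $y_\delta\in\mathbb{Z}$ chosen so that the second column of $\gamma_\delta$ is integral. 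Feeding this into the classical transformation $\eta(\gamma_\delta w)=\varepsilon(\gamma_\delta)(c_\delta w+d_\delta)^{1/2}\eta(w)$ with $c_\delta=d/g_\delta$ and $w=(g_\delta z+y_\delta)/(\delta/g_\delta)$ collapses the automorphy factor to $c_\delta w+d_\delta = g_\delta(dz+b)/\delta$, yielding the clean identity
$$\eta(\delta \gamma z) \;=\; \varepsilon(\gamma_\delta)\,(g_\delta/\delta)^{1/2}\,(dz+b)^{1/2}\,\eta(U_\delta z),\qquad U_\delta z := \frac{g_\delta z + y_\delta}{\delta/g_\delta}.$$
Raising to $r_\delta$ and multiplying over $\delta\mid N$, the weight condition $\ell=\tfrac12\sum r_\delta$ collects the $(dz+b)^{1/2}$ factors into $(dz+b)^\ell$, which cancels the $(dz+b)^{-\ell}$ in $(f|_\ell\gamma)(z)=(dz+b)^{-\ell}f(\gamma z)$. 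Hence $(f|_\ell\gamma)(z) = \kappa \prod_{\delta\mid N}\eta(U_\delta z)^{r_\delta}$ for a nonzero constant $\kappa$.

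The asymptotic $\eta(w)\sim e^{2\pi i w/24}$ as $\mathrm{Im}(w)\to\infty$ then gives
$$\prod_{\delta\mid N}\eta(U_\delta z)^{r_\delta} \;\sim\; (\text{const})\cdot \exp\!\left(\frac{2\pi i z}{24}\sum_{\delta\mid N}\frac{r_\delta\, g_\delta^2}{\delta}\right),$$
so the leading exponent of $q$ in $(f|_\ell\gamma)(z)$ is $e:=\tfrac{1}{24}\sum_{\delta\mid N} r_\delta\gcd(d,\delta)^2/\delta$. The cusp width of $c/d$ in $\Gamma_0(N)$ is the standard $h=N/(d\gcd(d,N/d))$, so the order of vanishing measured against the local uniformizer $q_h=e^{2\pi i z/h}$ is $he$, which rearranges to $\frac{N}{24}\sum_{\delta\mid N}\frac{\gcd(d,\delta)^2\, r_\delta}{\gcd(d,N/d)\,d\,\delta}$, exactly as claimed.

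The main obstacle will be keeping the bookkeeping honest: one must (i) verify that $y_\delta$ can always be taken in $\mathbb{Z}$ so that $\gamma_\delta$ indeed lies in $\mathrm{SL}_2(\mathbb{Z})$ (this amounts to solving $y_\delta d \equiv g_\delta b \pmod{\delta}$, which is solvable since $g_\delta\mid\gcd(d,\delta b)$), and (ii) confirm that the product $\kappa$ of the 24th roots of unity $\varepsilon(\gamma_\delta)^{r_\delta}$ together with the half-integer factors $(g_\delta/\delta)^{r_\delta/2}$ combines to a well-defined nonzero constant, which is precisely where the mod-$24$ integrality conditions of Theorem~\ref{thm_ono1} are used to pin down the branch of the eta multiplier system. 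Once those points are settled, the remaining manipulations are routine computations with Möbius transformations.
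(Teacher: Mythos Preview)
The paper does not prove this theorem; it is quoted verbatim from \cite[Theorem~1.65]{ono2004} as a black-box tool, with no argument supplied. So there is nothing in the paper to compare your proposal against.

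That said, your sketch is the standard derivation of Ligozat's cusp-order formula and is essentially correct: factor $\begin{bmatrix}\delta&0\\0&1\end{bmatrix}\gamma$ as $\gamma_\delta$ times an upper-triangular integer matrix via Hermite normal form, apply the $\eta$ transformation law to each factor, read off the leading $q$-exponent $\tfrac{1}{24}\sum_\delta r_\delta\gcd(d,\delta)^2/\delta$, and multiply by the cusp width $N/(d\gcd(d,N/d))$. One small correction to your commentary: the mod-$24$ conditions of Theorem~\ref{thm_ono1} are not really what make $\kappa$ well defined---each $\varepsilon(\gamma_\delta)$ is already a specific $24$th root of unity and each $(g_\delta/\delta)^{r_\delta/2}$ is a positive real number, so $\kappa$ is an unambiguous nonzero constant once $\ell\in\mathbb{Z}$. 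What those conditions actually buy you is the $\Gamma_0(N)$-transformation law itself, which in turn guarantees that $(f|_\ell\gamma)(z)$ is periodic with period $h$, so that ``order of vanishing at the cusp'' is meaningful in the first place. Also note that Theorem~\ref{thm_ono1} alone does not place $f$ in $M_\ell(\Gamma_0(N),\chi)$ (holomorphicity at the cusps is precisely what Theorem~\ref{thm_ono2} is used to check), but the transformation law suffices for the periodicity you need.
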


\noindent
We now remind ourselves a result of Sturm \cite{Sturm1984} which gives a criterion to test whether two modular forms are congruent modulo a given prime.
\begin{theorem}\label{Sturm}
	Let $k$ be an integer and $g(z)=\sum_{n=0}^\infty a(n)q^n$ a modular form of weight $k$ for $\Gamma_{0}(N)$. For any given positive integer $m$, if $a(n)\equiv 0 \pmod{m}$ holds for all 
$\displaystyle{n\leq \frac{kN}{12} \prod_{p~ \text{prime} \atop p\,|\,N}~\left(1+\frac1p\right)}\, ,$
 then $a(n)\equiv 0 \pmod m$ holds for any $n\geq 0$.
\end{theorem}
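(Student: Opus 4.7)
The plan is to reduce Sturm's theorem to the classical valence formula for modular forms on $\text{SL}_2(\mathbb{Z})$, exploiting the identity $B = \frac{kN}{12}\prod_{p\mid N}(1+1/p) = \frac{k}{12}[\text{SL}_2(\mathbb{Z}):\Gamma_0(N)]$. By the Chinese Remainder Theorem one may assume that $m=p$ is prime, and, writing $\nu_p(g):=\min\{n : p\nmid a(n)\}$, the task reduces to showing that $\nu_p(g)>B$ forces $g\equiv 0\pmod p$.

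I would proceed by contradiction via a norm construction. Let $\gamma_1=I,\gamma_2,\ldots,\gamma_r$ be coset representatives for $\Gamma_0(N)\backslash\text{SL}_2(\mathbb{Z})$, where $r=[\text{SL}_2(\mathbb{Z}):\Gamma_0(N)]=N\prod_{p\mid N}(1+1/p)$, and set
$$G(z) := \prod_{i=1}^{r}\bigl(g|_k\gamma_i\bigr)(z).$$
Since the $\gamma_i$ are permuted modulo $\Gamma_0(N)$ by right multiplication by elements of $\text{SL}_2(\mathbb{Z})$, the function $G$ lies in $M_{kr}(\text{SL}_2(\mathbb{Z}))$. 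Because each factor has coefficients in $\mathbb{Z}[\zeta_N]$ and the product is Galois-stable, $G$ has in fact a rational-integer $q$-expansion at $\infty$.

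Next, I would bound the $q$-order of vanishing of $\bar G := G \bmod p$ at $\infty$ from below. Each non-identity factor $g|_k\gamma_i$ is a $q^{1/N}$-expansion with $p$-integral coefficients (since $g$ is holomorphic at the cusp $\gamma_i^{-1}\infty$), so its reduction modulo $p$ has nonnegative $q^{1/N}$-order. The identity factor reduces to a mod-$p$ series starting at $q^{\nu_p(g)}$, equivalently at $(q^{1/N})^{N\nu_p(g)}$. Summing these orders, $\bar G$ has $q^{1/N}$-order at least $N\nu_p(g)$, equivalently $q$-order at least $\nu_p(g) > B = kr/12$. The valence formula on $\text{SL}_2(\mathbb{Z})$ says that any nonzero modular form of weight $kr$ has total vanishing on the standard fundamental domain equal to $kr/12$, so the strict inequality forces $\bar G \equiv 0$, i.e., $G\equiv 0 \pmod p$. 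Because $\mathbb{F}_p[[q^{1/N}]]$ is an integral domain, some factor $g|_k\gamma_i$ vanishes mod $p$; applying $|_k\gamma_i^{-1}$ recovers $g\equiv 0\pmod p$, contradicting $\nu_p(g) < \infty$.

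The principal obstacle is the careful bookkeeping in the norm construction: one must verify that the $q^{1/N}$-expansion of each $g|_k\gamma_i$ has $p$-integral coefficients, and that reducing $G$ modulo $p$ interacts correctly with Galois descent from $\mathbb{Z}[\zeta_N]$ to $\mathbb{Z}$, as well as with the valence formula (which must be applied to a mod-$p$ form, necessitating the machinery of Serre's filtration of modular forms modulo $p$). Once these technicalities are in hand the whole proof collapses to the single inequality $\nu_p(g) > kr/12 = B$, matching the Sturm threshold exactly; a complete argument is carried out in \cite{Sturm1984}.
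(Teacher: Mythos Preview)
Your sketch is the standard norm-construction argument from Sturm's original paper, and the outline is correct (modulo the technical caveats about $p$-integrality of the slashed expansions and the mod-$p$ valence formula that you yourself flag). However, the paper does \emph{not} prove this theorem at all: it is stated as a recalled result from the literature, with attribution to \cite{Sturm1984}, and no proof is supplied. So there is nothing to compare against; your proposal simply fills in what the paper deliberately omits by citation, and in doing so follows exactly the route of the cited source.
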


\noindent
We now recall the definition of Hecke operators.
Let $m$ be a positive integer and $f(z) = \sum_{n=0}^{\infty} a(n)q^n \in M_{\ell}(\Gamma_0(N),\chi)$. Then the action of Hecke operator $T_m$ on $f(z)$ is defined by 
\begin{align*}
f(z)\,|\,T_m := \sum_{n=0}^{\infty} \left(\sum_{d\mid \gcd(n,m)}\chi(d)d^{\ell-1}a\left(\frac{nm}{d^2}\right)\right)q^n.
\end{align*}
In particular, if $m=p$ is prime, we have 
\begin{align}\label{Tp}
f(z)\,|\,T_p := \sum_{n=0}^{\infty} \left(a(pn)+\chi(p)p^{\ell-1}a\left(\frac{n}{p}\right)\right)q^n.
\end{align}
We take by convention that $a(n/p)=0$ whenever $p \nmid n$. The next result  follows directly from \eqref{Tp}.

\begin{proposition} \label{prophecke}
Let $p$ be a prime, $g(z)\in \Z[[q]], h(z)\in\Z[[q^p]]$, and $k > 1$. Then, we have
\begin{align*}
\left(g(z)h(z)\right)\,|\,T_p\equiv\left(g(z)|T_p\cdot h(z/p)\right)\pmod{p}.
\end{align*}	
\end{proposition}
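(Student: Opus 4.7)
The plan is to unwind the formula \eqref{Tp} defining $T_p$ and use the hypothesis $h(z)\in\Z[[q^p]]$ to carry out a straightforward reindexing. First I observe the key simplification: since $k>1$ (here $k$ stands for the weight $\ell$ appearing in \eqref{Tp}), we have $\chi(p)p^{\ell-1}\equiv 0\pmod{p}$, so for any $f(z)=\sum a(n)q^n\in\Z[[q]]$,
$$f(z)\,|\,T_p\equiv\sum_{n\geq 0}a(pn)\,q^n\pmod{p}.$$
Thus mod $p$ the Hecke operator $T_p$ acts as the Atkin $U$-operator.

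Next I would write $g(z)=\sum_{m\geq 0}b(m)q^m$ and $h(z)=\sum_{j\geq 0}c(j)q^{pj}$, so that $g(z)h(z)=\sum_{n\geq 0}a(n)q^n$ with $a(n)=\sum_{j\geq 0}b(n-pj)c(j)$. Replacing $n$ by $pn$ gives
$$a(pn)=\sum_{j\geq 0}b(p(n-j))\,c(j),$$
and therefore
$$(g(z)h(z))\,|\,T_p\equiv\sum_{n\geq 0}a(pn)q^n=\sum_{n\geq 0}\sum_{j\geq 0}b(p(n-j))\,c(j)\,q^n\pmod{p}.$$

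Finally, setting $m=n-j$ and factoring,
$$\sum_{n\geq 0}\sum_{j\geq 0}b(p(n-j))c(j)q^n=\left(\sum_{m\geq 0}b(pm)q^m\right)\!\left(\sum_{j\geq 0}c(j)q^j\right).$$
The first factor is $g(z)\,|\,T_p$ modulo $p$ by the same simplification as above, and the second factor is exactly $h(z/p)$, since $h(z/p)=\sum_{j\geq 0}c(j)e^{2\pi i zj}=\sum_{j\geq 0}c(j)q^j$. Combining these identifications yields the claim. No step is a genuine obstacle; the only subtlety is the bookkeeping distinction between the formal-variable substitution $h(z)\mapsto h(z/p)$ and the underlying coefficient-level statement, which the computation above makes transparent.
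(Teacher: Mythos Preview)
Your proof is correct and is exactly the direct unwinding of \eqref{Tp} that the paper has in mind; the paper itself gives no argument beyond the remark that the proposition ``follows directly from \eqref{Tp}.'' Your write-up simply supplies the omitted details, with the key observation that $p\mid p^{\ell-1}$ reduces $T_p$ to the $U_p$-operator modulo $p$.
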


With the above in place, we are ready to supply the proof of Theorem~\ref{isolated_congs}.

\begin{proof}[Proof of Theorem~\ref{isolated_congs}]
We begin by proving the mod 7 congruence that appears in the theorem.  By taking $c=3$ in \eqref{genfn_ac}, we have
\begin{align} \label{thm1.1(III)} \sum_{n\geq0}a_3(n)\,q^n&=\prod_{k\geq1}\frac1{(1-q^k)(1-q^{2k})^{2}}.
\end{align}

\noindent
Let $$H(z):= \frac{\eta^{76}(z)}{\eta^2(2z)}.$$

\noindent
By Theorems \ref{thm_ono1} and \ref{thm_ono2}, we find that $H(z)$ is a modular form of weight $37$, level $8$ and character $\chi_1=(\frac{-2^{-2}}{\bullet})$. 
By \eqref{thm1.1(III)}, the Fourier expansion of our form satisfies
\begin{align*}
H(z)=\left(\sum_{n=0}^{\infty}a_3(n)q^{n+3}\right)\prod_{k\geq1}(1-q^k)^{77}.
\end{align*}

\noindent
Using Proposition \ref{prophecke}, we calculate that 
\begin{align*}
H(z)\,|\,T_{7}\equiv \left(\sum_{n=0}^{\infty} a_3(7n+4)q^{n+1} \right)\prod_{k\geq1}(1-q^k)^{11}\pmod{7}.
\end{align*}
Since the Hecke operator is an endomorphism on $ M_{37}\left(\Gamma_{0}(8), \chi_1\right)$, we have that $H(z)\,|\,T_{7}\in M_{37}\left(\Gamma_{0}(8), \chi_1\right)$.  By Theorem \ref{Sturm}, the Sturm bound for this space of forms is $37$. Using $SageMath$, we verify that the Fourier coefficients of $H(z)\,|\,T_{7}$ up to the desired bound are congruent  to 0  modulo 7. Hence, Theorem \ref{Sturm} confirms that $H(z)\,|\,T_{7}\equiv 0\pmod{7}$. This completes the proof of the mod 7 congruence.

We next consider the mod 11 congruence in the theorem whose proof is rather similar to the proof of the mod 7 congruence above.  
By \eqref{genfn_ac}, we have
\begin{align} \label{thm1.1} \sum_{n\geq0}a_5(n)\,q^n&=\prod_{k\geq1}\frac1{(1-q^k)(1-q^{2k})^{4}}.
\end{align}

\noindent
Let $$G(z):= \frac{\eta^{32}(z)}{\eta^4(2z)}.$$

\noindent
By Theorems \ref{thm_ono1} and \ref{thm_ono2}, we find that $G(z)$ is a modular form of weight $14$, level $4$ and character $\chi_0=(\frac{2^{-4}}{\bullet})$. 
By \eqref{thm1.1}, the Fourier expansion of our form satisfies
\begin{align*}
G(z)=\left(\sum_{n=0}^{\infty}a_5(n)q^{n+1}\right)\prod_{k\geq1}(1-q^k)^{33}.
\end{align*}

\noindent
Using Proposition \ref{prophecke}, we calculate that 
\begin{align*}
G(z)\,|\,T_{11}\equiv \left(\sum_{n=0}^{\infty} a_5(11n+10)q^{n+1} \right)\prod_{k\geq1}(1-q^k)^{3}\pmod{11}.
\end{align*}
Since the Hecke operator is an endomorphism on $ M_{14}\left(\Gamma_{0}(4), \chi_0\right)$, we have that $G(z)\,|\,T_{11}\in M_{14}\left(\Gamma_{0}(4), \chi_0\right)$.  By Theorem \ref{Sturm}, the Sturm bound for this space of forms is $7$. Hence, Theorem \ref{Sturm} confirms that $G(z)\,|\,T_{11}\equiv 0\pmod{11}$. This completes the proof of the mod 11 congruence. 
\end{proof}

\section{Closing Thoughts} \label{summary}

We now transition to a consideration of combinatorial objects related to generalized cubic partitions, namely, generalized cubic overpartitions.    

An \emph{overpartition} of $n$ is a partition of $n$ in which the first occurrence of a part may be overlined.  
In 2010, Kim \cite{Kim} introduced an overpartition version of cubic partitions, sometimes called \emph{overcubic partitions}. Following the four cubic partitions of $n=3$ from the Introduction, we find that the corresponding $12$ overcubic partitions of $n=3$ are given by the following:
\begin{align*} &3, \ \ \overline{3}, \ \ 2 + 1, \ \ {\color{red}2} + 1, \ \ \overline{2} + 1, \ \ {\color{red}\overline{2}} + 1, \\
&2 + \overline{1}, \ \ {\color{red}2} + \overline{1}, \ \ \overline{2} + \overline{1}, \ \ {\color{red}\overline{2}} + \overline{1}, \ \ 1 + 1 + 1, \ \ \overline{1} + 1 +  1.
\end{align*}
If we now generalize these to mirror our generalization of cubic partitions, then the generating function for these cubic overpartitions is given by 
$$\overline{F}_c(q):=\sum_{n\geq0} \overline{a}_c(n)\,q^n
=\frac{ (-q;q)_\infty (-q^2; q^2)^{c-1}_\infty }{ (q;q)_{\infty} (q^2;q^2)_{\infty}^{c-1} }.$$

\noindent
We now prove the following natural companion to Corollary~\ref{a_kp-1_mod_p}.

\begin{theorem}  \label{James2} Let $p\geq 3$ be prime, and let $r$, $1\leq r\leq p-1$, such that r is a quadratic nonresidue modulo $p$.  Also let $k$ be a positive integer.  
Then, for all $n\geq 0,$  $\overline{a}_{kp-1}(pn+r) \equiv 0 \pmod{p}.$  
\end{theorem}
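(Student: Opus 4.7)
The plan is to emulate, in the overpartition setting, the strategy used to prove Theorem~\ref{a_p-1_mod_p} and Corollary~\ref{a_kp-1_mod_p}: reduce the generating function modulo $p$ to a product of one of Ramanujan's theta functions and a power series in $q^p$, and then exploit the restricted support of that theta function. The relevant theta function here is $\varphi(q):=\sum_{n\in\Z}q^{n^2}$, which is supported precisely on perfect squares.

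First, I would rewrite $\overline{F}_c(q)$ as a pure eta-quotient. Using the standard identities $(-q;q)_\infty=f_2/f_1$ and $(-q^2;q^2)_\infty=f_4/f_2$, a short calculation gives
$$
\overline{F}_c(q)\ =\ \frac{f_4^{\,c-1}}{f_1^{\,2}\,f_2^{\,2c-3}}.
$$
Specializing to $c=kp-1$ yields exponents $c-1=kp-2$ and $2c-3=2kp-5$. Applying the standard congruence $f_j^{\,p}\equiv f_{jp}\pmod p$ (Fermat's little theorem at the level of $\Z$-coefficient power series), I pull the full $p$-th powers out of both numerator and denominator to obtain
$$
\overline{F}_{kp-1}(q)\ \equiv\ \frac{f_2^{\,5}}{f_1^{\,2}\,f_4^{\,2}}\cdot\frac{f_{4p}^{\,k}}{f_{2p}^{\,2k}}\pmod p.
$$

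Now I invoke the classical Jacobi triple product identity in the form $\varphi(q)=f_2^{\,5}/(f_1^{\,2}f_4^{\,2})$, so that
$$
\overline{F}_{kp-1}(q)\ \equiv\ \varphi(q)\cdot H(q^p)\pmod p,\qquad H(q):=\frac{f_4^{\,k}}{f_2^{\,2k}}.
$$
Since $H(q^p)$ is a power series in $q^p$, the coefficient of $q^{pn+r}$ on the right-hand side is a $\Z$-linear combination of coefficients $[q^{pn+r-pj}]\,\varphi(q)$ with $j\geq 0$. A nonzero contribution forces $pn+r-pj=m^2$ for some $m\in\Z$, which would make $r\equiv m^2\pmod p$ a quadratic residue. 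Because $r$ is by hypothesis a quadratic nonresidue modulo $p$ (and $1\leq r\leq p-1$, so $r\not\equiv 0\pmod p$), every such term vanishes, and we conclude $\overline{a}_{kp-1}(pn+r)\equiv 0\pmod p$.

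The only non-routine step is the identification $\varphi(q)=f_2^{\,5}/(f_1^{\,2}f_4^{\,2})$ that surfaces after the mod-$p$ reduction; everything else parallels the proof of Theorem~\ref{a_p-1_mod_p}, with squares $m^2$ playing the role that the triangular numbers $k(k+1)/2$ played there. I anticipate no genuine obstacle: the plan is essentially one clean algebraic manipulation followed by a quadratic reciprocity-style parity check.
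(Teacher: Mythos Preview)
Your proposal is correct and follows essentially the same route as the paper: rewrite $\overline{F}_c(q)=f_4^{\,c-1}/(f_1^{\,2}f_2^{\,2c-3})$, set $c=kp-1$, peel off the $p$-th powers to reduce modulo $p$ to $\varphi(q)\cdot f_{4p}^{\,k}/f_{2p}^{\,2k}$, and use that $\varphi$ is supported on squares while $r$ is a quadratic nonresidue. The only cosmetic difference is that the paper factors $\frac{f_4^{kp}}{f_2^{2kp}}\cdot\frac{f_2^5}{f_1^2 f_4^2}$ exactly before reducing modulo $p$, whereas you reduce first; the identification $\varphi(q)=f_2^{\,5}/(f_1^{\,2}f_4^{\,2})$ is cited from Berndt in the paper just as you anticipated.
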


\begin{proof} After elementary manipulations, the generating function for $\overline{a}_c(n)$ can be given in the form
$$\overline{F}_c(q)=\frac{ f_4^{c-1} }{ f_1^2f_2^{2c-3} }. $$
Now consider the above generating function for $c = kp-1$.  Then we have 

\begin{align*}
\sum_{n\geq 0}  \overline{a}_{kp-1}(n) q^n 
&= \frac{ f_4^{kp-2} }{ f_1^2f_2^{2kp-5} } 
= \left(  \frac{f_4^{kp}}{f_2^{2kp}} \right) \left( \frac{ f_2^5   }{ f_1^2f_4^2 }  \right) 
=  \left(  \frac{f_4^{kp}}{f_2^{2kp}} \right) \cdot \varphi(q)  \\
& \equiv \left(  \frac{f_{4p}^k}{f_{2p}^{2k}}  \right) \cdot \varphi(q)  \pmod{p}
\end{align*}
where 
$$\varphi(q): = 1+2\sum_{j\geq 1} q^{j^2}$$
is another of Ramanujan's well--known theta functions \cite[p. 6]{Berndt2}. 
Note that 
$$
\frac{f_{4p}^k}{f_{2p}^{2k}}
$$
is a function of $q^p$, which means that, in order to prove this theorem, we simply need to consider whether $pn+r = j^2$ for some $r$ and $j$.  Since $r$ is assumed to be a quadratic nonresidue modulo $p$, we know that there can be no solutions to the equation  $pn+r = j^2$.  This proves our result.  
\end{proof}

We close this paper with two sets of brief remarks.  

\begin{enumerate}

\item{} 
We first return to the generalized cubic partitions and make the following elementary observation.  For all $c>1$ which satisfy $c\equiv 1 \pmod p$ for $p=5, 7$ or $11$, it is clear that $a_c(n)$ ``inherits'' the corresponding congruence modulo $5$, $7$, or $11$ as Ramanujan's \eqref{RamCong}.  That is to say, for all $j\geq 0$ and $n\geq 0,$ 
\begin{align*} 
a_{5j+1}(5n+4)&\equiv0\pmod 5,  \\  
a_{7j+1}(7n+5)& \equiv0\pmod 7, \textrm{\ \ and} \\
a_{11j+1}(11n+6)& \equiv0\pmod{11}. 
\end{align*}

\item{} 
As we mentioned above, our emphasis in this paper has been on congruences modulo a prime $p$ satisfied by $a_c(n)$ or $\overline{a}_c(n)$ for particular values of $c$.  We encourage the interested reader to consider identifying and proving additional congruences for these families of functions modulo powers of a prime akin to Theorem~\ref{ChanTohMod5}. 

\end{enumerate}

\end{document}